\providecommand{\U}[1]{\protect\rule{.1in}{.1in}}
\newtheorem{theorem}{Theorem}
\theoremstyle{plain}
\newtheorem{corollary}{Corollary}
\newtheorem{remark}{Remark}
\numberwithin{equation}{section}
\begin{document}
\title[Some applications of the Regularity Principle in sequence spaces]{Some applications of the Regularity Principle in sequence spaces}
\author[Wasthenny Vasconcelos Cavalcante]{Wasthenny Vasconcelos Cavalcante}
\address[W.V. Cavalcante]{Department of Mathematics \\
\indent UFPE \\
\indent Recife, PE, Brazil.}
\thanks{Partially supported by Capes.}
\thanks{MSC2010: 46G25}
\keywords{Multilinear forms, Hardy--Littlewood inequalities, Regularity Principle}

\begin{abstract}
The Hardy--Littlewood inequalities for $m$-linear forms have their origin with
the seminal paper of Hardy and Littlewood (Q.J. Math, 1934). Nowadays it has
been extensively investigated and many authors are looking for the optimal
estimates of the constants involved. For $m<p\leq2m$ it asserts that there is
a constant $D_{m,p}^{\mathbb{K}}\geq1$ such that
\[
\left(  \sum_{j_{1},\cdots,j_{m}=1}^{n}\left\vert T(e_{j_{1}},\cdots,e_{j_{m}%
})\right\vert ^{\frac{p}{p-m}}\right)  ^{\frac{p-m}{p}}\leq D_{m,p}%
^{\mathbb{K}}\left\Vert T\right\Vert ,
\]
for all $m$--linear forms $T:\ell_{p}^{n}\times\cdots\times\ell_{p}%
^{n}\rightarrow\mathbb{K}=\mathbb{R}$ or $\mathbb{C}$ and all positive
integers $n$. Using a Regularity Principle recently proved by Pellegrino,
Santos, Serrano and Teixeira, we present a straightforward proof of the
Hardy--Littewood inequality and show that:

(1) if $m<p_{1}<p_{2}\leq2m$ then $D_{m,p_{1}}^{\mathbb{K}}\leq D_{m,p_{2}%
}^{\mathbb{K}}$;

(2) $D_{m,p}^{\mathbb{K}}\leq D_{m-1,p}^{\mathbb{K}}$ whenever $m<p\leq
2\left(  m-1\right)  $ for all $m\geq3$.

\end{abstract}
\maketitle


\section{Introduction}

Littlewood's $4/3$ inequality \cite{LLL} is probably the forerunner of the by
now so-called Hardy--Littlewood inequalities for multilinear forms. Published
in 1930, it asserts that%
\[
\left(  \sum_{i,j=1}^{n}\left\vert T(e_{i},e_{j})\right\vert ^{\frac{4}{3}%
}\right)  ^{\frac{3}{4}}\leq\sqrt{2}\sup_{\left\Vert x\right\Vert ,\left\Vert
y\right\Vert \leq1}\left\vert T\left(  x,y\right)  \right\vert
\]
for all bilinear forms $T:\ell_{\infty}^{n}\times\ell_{\infty}^{n}%
\rightarrow\mathbb{C}$ for all positive integers $n$. One year later,
Bohnenblust and Hille \cite{bh} generalized Littlewood's inequality to
$m$-linear forms and, in 1934, Hardy and Littlewood \cite{hl} extended
Littlewood's $4/3$ inequality to bilinear forms acting on $\ell_{p}^{n}$
spaces. In 1981, Praciano-Pereira \cite{praciano} extended Hardy--Littlewood's
inequalities to $m$-linear forms and, in 2016 Dimant and Sevilla-Peris
\cite{dimant} completed the results of Praciano-Pereira.

These results can be summarized as follows: for any integer $m\geq2$ there
exist (optimal) constants $C_{m,p}^{\mathbb{K}},D_{m,p}^{\mathbb{K}}\geq1$
such that
\begin{equation}
\left(  \sum_{j_{1},\cdots,j_{m}=1}^{n}\left\vert T(e_{j_{1}},\cdots,e_{j_{m}%
})\right\vert ^{\frac{2mp}{mp+p-2m}}\right)  ^{\frac{mp+p-2m}{2mp}}\leq
C_{m,p}^{\mathbb{K}}\left\Vert T\right\Vert , \label{i99}%
\end{equation}
when $2m\leq p\leq\infty$, and%
\begin{equation}
\left(  \sum_{j_{1},\cdots,j_{m}=1}^{n}\left\vert T(e_{j_{1}},\cdots,e_{j_{m}%
})\right\vert ^{\frac{p}{p-m}}\right)  ^{\frac{p-m}{p}}\leq D_{m,p}%
^{\mathbb{K}}\left\Vert T\right\Vert , \label{599}%
\end{equation}
when $m<p\leq2m$, for all $m$--linear forms $T:\ell_{p}^{n}\times\cdots
\times\ell_{p}^{n}\rightarrow\mathbb{K}$ and all positive integers $n$ (here,
and henceforth, $\mathbb{K}=\mathbb{R}$ or $\mathbb{C}$). The exponents are optimal.

The investigation of the optimal constants of the Hardy--Littlewood
inequalities (see \cite{pos, araujo1, araujo2, araujo3, araujo4}) is motivated
by their connection with the important Bohnenblust-Hille inequality (see, for
instance \cite{caro, maia, communications, santos} and the references therein).

\bigskip The original estimates for $D_{m,p}^{\mathbb{K}}$ are
\begin{equation}
D_{m,p}^{\mathbb{K}}\leq\left(  \sqrt{2}\right)  ^{m-1}.\label{1111}%
\end{equation}
Recently, Albuquerque \textit{et al.\cite{alb} }(see also \cite{nunes}) have
proved that%
\begin{equation}
D_{m,p}^{\mathbb{K}}\leq2^{\frac{\left(  m-1\right)  \left(  p-m+1\right)
}{p}}.\label{1112}%
\end{equation}

\bigskip In this note we use a Regularity Principle recently proved in
\cite{regularity} to give a straightforward proof of (\ref{599}) and we also
show some new monotonicity properties of the constants $D_{m,p}^{\mathbb{K}}$.

\bigskip

\section{A straightforward proof of the Hardy--Littlewood inequality and new
mototonicity properties}

\bigskip From now on, if $p\in(1,\infty),$ the number $p^{\ast}$ is the
conjugate of $p$, i.e.,%
\[
\frac{1}{p}+\frac{1}{p^{\ast}}=1.
\]
We start off by giving a straightforward proof of (\ref{599}). It is folklore
that the case $p=2m$ in (\ref{i99}) can be re-written as a coincidence theorem
for multiple summing operators. It says that every continuous $m$-linear form
is multiple $\left(  2;\left(  2m\right)  ^{\ast}\right)  $-summing. By the
Regularity Principle from \cite{regularity} (more precisely, by the Inclusion
Theorem \cite[Proposition 3.4]{regularity}) we know that every multiple
$\left(  2;\left(  2m\right)  ^{\ast}\right)  $-summing form is multiple
$\left(  \frac{p}{p-m};p^{\ast}\right)  $-summing, when $m<p\leq2m$ with
standard domination of norms. This means that%
\[
\left(  \sum_{j_{1},\cdots,j_{m}=1}^{n}\left\vert T(e_{j_{1}},\cdots,e_{j_{m}%
})\right\vert ^{\frac{p}{p-m}}\right)  ^{\frac{p-m}{p}}\leq D_{m,2m}%
^{\mathbb{K}}\left\Vert T\right\Vert
\]
for all $m$--linear forms $T:\ell_{p}^{n}\times\cdots\times\ell_{p}%
^{n}\rightarrow\mathbb{K}$ and all positive integers $n.$ In other words, we
have just proved (\ref{599}). Moreover, we have shown that (\ref{599}) is a
consequence of (\ref{i99}).

Now we prove a monotonicity result:

\begin{theorem}
If $m<p_{1}\leq p_{2}\leq2m$ then $D_{m,p_{1}}^{\mathbb{K}}\leq D_{m,p_{2}%
}^{\mathbb{K}}$\bigskip.
\end{theorem}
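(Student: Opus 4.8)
The plan is to rerun the argument the excerpt just used to establish (\ref{599}), only starting from the inequality (\ref{599}) at the larger exponent $p_2$ in place of the $p=2m$ coincidence. Concretely, I would first rewrite (\ref{599}) for $p_2$ as the coincidence that every $m$-linear form $T\colon\ell_{p_2}^n\times\cdots\times\ell_{p_2}^n\to\mathbb{K}$ is multiple $\left(\frac{p_2}{p_2-m};p_2^{\ast}\right)$-summing, with summing norm at most $D_{m,p_2}^{\mathbb{K}}\left\Vert T\right\Vert$; this is the same reformulation the excerpt applied to $p=2m$, now with output exponent $\frac{p_2}{p_2-m}$ and weak-summability exponent $p_2^{\ast}$.

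Next I would apply the Inclusion Theorem \cite[Proposition 3.4]{regularity} to pass from $\left(\frac{p_2}{p_2-m};p_2^{\ast}\right)$-summability to $\left(\frac{p_1}{p_1-m};p_1^{\ast}\right)$-summability, with standard domination of the constants. The key observation is that the exponents attached to $p_1$ and $p_2$ satisfy exactly the relations that legitimized the earlier passage from $2m$ to $p$: since $m<p_1\leq p_2$, the map $t\mapsto\frac{t}{t-m}$ being decreasing on $(m,\infty)$ gives $\frac{p_1}{p_1-m}\geq\frac{p_2}{p_2-m}$, while $p_1^{\ast}\geq p_2^{\ast}$, and the relevant summability gap is monotone since
\[
\frac{1}{p_1^{\ast}}-\frac{p_1-m}{p_1}=\frac{m-1}{p_1}\geq\frac{m-1}{p_2}=\frac{1}{p_2^{\ast}}-\frac{p_2-m}{p_2}.
\]
Thus $p_2$ plays precisely the role that $2m$ played before, and no input beyond the already-established (\ref{599}) at $p_2$ is required.

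Finally I would evaluate the resulting multiple $\left(\frac{p_1}{p_1-m};p_1^{\ast}\right)$-summing inequality on the canonical basis $(e_j)_{j=1}^{n}$ of $\ell_{p_1}^n$, whose weak $\ell_{p_1^{\ast}}$-norm equals $1$; this recovers (\ref{599}) at $p_1$ with constant $D_{m,p_2}^{\mathbb{K}}$. Since $D_{m,p_1}^{\mathbb{K}}$ is by definition the smallest admissible constant at $p_1$, the desired inequality $D_{m,p_1}^{\mathbb{K}}\leq D_{m,p_2}^{\mathbb{K}}$ follows at once.

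I do not expect a serious obstacle, since the argument is structurally identical to the proof of (\ref{599}) already given; the only genuine verification is that the exponent configuration for the descent $p_2\to p_1$ meets the hypotheses of \cite[Proposition 3.4]{regularity}, which as displayed above reduces to an elementary inequality. The single point demanding care is that the reformulation of (\ref{599}) at $p_2$ carries the \emph{optimal} constant $D_{m,p_2}^{\mathbb{K}}$ itself, rather than some larger multiple-summing constant, so that the domination delivered by the Inclusion Theorem is genuinely by $D_{m,p_2}^{\mathbb{K}}$; this is exactly the equivalence between the Hardy--Littlewood inequality and the corresponding coincidence theorem that underlies the whole approach.
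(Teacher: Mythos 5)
Your overall strategy is exactly the paper's: reformulate (\ref{599}) at $p_{2}$ as the statement that every $m$-linear form is multiple $\left(\frac{p_{2}}{p_{2}-m};p_{2}^{\ast}\right)$-summing with constant $D_{m,p_{2}}^{\mathbb{K}}$, apply the Inclusion Theorem \cite[Proposition 3.4]{regularity} to descend to multiple $\left(\frac{p_{1}}{p_{1}-m};p_{1}^{\ast}\right)$-summability, and evaluate on the canonical basis. However, the step you yourself single out as ``the only genuine verification'' is carried out with the wrong condition. The inequality you display,
\[
\frac{1}{p_{1}^{\ast}}-\frac{p_{1}-m}{p_{1}}=\frac{m-1}{p_{1}}\geq\frac{m-1}{p_{2}}=\frac{1}{p_{2}^{\ast}}-\frac{p_{2}-m}{p_{2}},
\]
is the gap condition of the classical inclusion theorem for \emph{linear} $(q;p)$-summing operators. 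For \emph{multiple} summing $m$-linear forms the admissible descent is more restrictive: passing from multiple $(s;r)$-summing to multiple $(t;q)$-summing with $q\geq r$ requires
\[
\frac{1}{t}\leq\frac{1}{s}-m\left(\frac{1}{r}-\frac{1}{q}\right),
\]
because each of the $m$ coordinates pays its own H\"older price; this is exactly why Proposition 3.4 of \cite{regularity} features the exponent $\frac{srq}{rq+msr-msq}$ and the constraint $q<\frac{msr}{ms-r}$, and these are precisely the two things the paper's proof checks. Since $m\geq2$ and $\frac{1}{r}-\frac{1}{q}=\frac{1}{p_{1}}-\frac{1}{p_{2}}\geq0$, your condition is strictly weaker than the required one: a criterion without the factor $m$ would also ``approve'' descents that Proposition 3.4 does not license (for instance, to strong exponents smaller than $\frac{p_{1}}{p_{1}-m}$).

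The conclusion is nevertheless recoverable, because the correct condition does hold here, in fact with equality: with $s=\frac{p_{2}}{p_{2}-m}$, $r=p_{2}^{\ast}$, $q=p_{1}^{\ast}$ one computes
\[
\frac{1}{s}-m\left(\frac{1}{r}-\frac{1}{q}\right)=\left(1-\frac{m}{p_{2}}\right)-m\left(\frac{1}{p_{1}}-\frac{1}{p_{2}}\right)=1-\frac{m}{p_{1}}=\frac{p_{1}-m}{p_{1}},
\]
which is exactly the identity displayed in the paper's proof, and one must also check the second hypothesis $p_{1}^{\ast}<m^{\ast}\Leftrightarrow p_{1}>m$, which you omit. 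Once your verification is replaced by this computation, your argument becomes correct and coincides with the paper's; as written, though, the justification of the key step is flawed.
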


\begin{proof}
If
\[
\left(  \sum_{j_{1},\cdots,j_{m}=1}^{n}\left\vert T(e_{j_{1}},\cdots,e_{j_{m}%
})\right\vert ^{\frac{p_{2}}{p_{2}-m}}\right)  ^{\frac{p_{2}-m}{p_{2}}}\leq
D_{m,p_{2}}^{\mathbb{K}}\left\Vert T\right\Vert
\]
for all $m$--linear forms $T:\ell_{p_{2}}^{n}\times\cdots\times\ell_{p_{2}%
}^{n}\rightarrow\mathbb{K}$ and all positive integers $n$, we know that every
continuous $m$-linear form is multiple $\left(  \frac{p_{2}}{p_{2}-m}%
;p_{2}^{\ast}\right)  $-summing with norm dominated by $D_{m,p_{2}%
}^{\mathbb{K}}.$ By \cite[Proposition 3.4]{regularity}, since%
\[
\frac{\left(  \frac{p_{2}}{p_{2}-m}\right)  \left(  \frac{p_{2}}{p_{2}%
-1}\right)  \left(  \frac{p_{1}}{p_{1}-1}\right)  }{\left(  \frac{p_{2}}%
{p_{2}-1}\right)  \left(  \frac{p_{1}}{p_{1}-1}\right)  +m\left(  \frac{p_{2}%
}{p_{2}-m}\right)  \left(  \frac{p_{2}}{p_{2}-1}\right)  -m\left(  \frac
{p_{2}}{p_{2}-m}\right)  \left(  \frac{p_{1}}{p_{1}-1}\right)  }=\frac{p_{1}%
}{p_{1}-m},
\]
and%
\[
p_{1}^{\ast}<\frac{m\left(  \frac{p_{2}}{p_{2}-m}\right)  \left(  \frac{p_{2}%
}{p_{2}-1}\right)  }{m\left(  \frac{p_{2}}{p_{2}-m}\right)  -\left(
\frac{p_{2}}{p_{2}-1}\right)  }=m^{\ast}\Leftrightarrow p_{1}>m
\]
we conclude that every continuous $m$-linear form is multiple $\left(
\frac{p_{1}}{p_{1}-m};p_{1}^{\ast}\right)  $-summing with norm dominated by
$D_{m,p_{2}}^{\mathbb{K}}.$ Thus%
\[
\left(  \sum_{j_{1},\cdots,j_{m}=1}^{n}\left\vert T(e_{j_{1}},\cdots,e_{j_{m}%
})\right\vert ^{\frac{p_{1}}{p_{1}-m}}\right)  ^{\frac{p_{1}-m}{p_{1}}}\leq
D_{m,p_{2}}^{\mathbb{K}}\left\Vert T\right\Vert
\]
for all $m$--linear forms $T:\ell_{p_{1}}^{n}\times\cdots\times\ell_{p_{1}%
}^{n}\rightarrow\mathbb{K}$ and all positive integers $n$. Thus%
\[
D_{m,p_{1}}^{\mathbb{K}}\leq D_{m,p_{2}}^{\mathbb{K}}.
\]

\end{proof}

\begin{remark}
The Inclusion Theorem \cite[Proposition 3.4]{regularity} was also proved
recently and independently, with a different technique,  by F. Bayart
\cite{bayart}.
\end{remark}

\section{Further monotonicity properties}

In this section we prove the following result:

\begin{theorem}
\label{944}Let $m\geq2$ be a positive integer and $m+1<p\leq2m.$ Then
\[
D_{m+1,p}^{\mathbb{K}}\leq D_{m,p}^{\mathbb{K}}.
\]

\end{theorem}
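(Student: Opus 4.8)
The plan is to reduce the $(m+1)$-linear inequality to the $m$-linear one by dualizing the last variable. Let $T:\ell_p^n\times\cdots\times\ell_p^n\to\mathbb{K}$ be $(m+1)$-linear with $\left\Vert T\right\Vert\le 1$, write $a_{j_1,\dots,j_{m+1}}=T(e_{j_1},\dots,e_{j_{m+1}})$, and set $\sigma=\frac{p}{p-m}$ and $\rho=\frac{p}{p-m-1}$, the latter being precisely the optimal $(m+1)$-linear exponent. Since $(\ell_p^n)^\ast=\ell_{p^\ast}^n$, for each fixed multi-index $\mathbf{j}=(j_1,\dots,j_m)$ one has
\[
\left(\sum_{k=1}^n|a_{\mathbf{j},k}|^{p^\ast}\right)^{1/p^\ast}=\left\Vert T(e_{j_1},\dots,e_{j_m},\cdot)\right\Vert_{(\ell_p^n)^\ast},
\]
so the $\ell_{p^\ast}^n$-valued $m$-linear map $A(x_1,\dots,x_m)=(T(x_1,\dots,x_m,e_k))_{k=1}^n$ satisfies $\left\Vert A\right\Vert\le\left\Vert T\right\Vert$.

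First I would establish the vector-valued (anisotropic) Hardy--Littlewood inequality
\[
\left(\sum_{\mathbf{j}}\left(\sum_{k=1}^n|a_{\mathbf{j},k}|^{p^\ast}\right)^{\sigma/p^\ast}\right)^{1/\sigma}\le D_{m,p}^{\mathbb{K}}\left\Vert T\right\Vert,
\]
that is, that $\ell_{p^\ast}^n$-valued $m$-linear maps obey the $m$-linear Hardy--Littlewood inequality with the \emph{same} constant $D_{m,p}^{\mathbb{K}}$. This is exactly the type of statement delivered by the Regularity Principle: I would feed the scalar inequality (\ref{599}) --- equivalently, the fact that every $m$-linear form is multiple $\left(\sigma;p^\ast\right)$-summing with norm $\le D_{m,p}^{\mathbb{K}}$ --- into the machinery of \cite[Proposition 3.4]{regularity}, now also recording the free $\ell_{p^\ast}$-summability in the dualized variable that comes from boundedness alone, namely $\sum_k|a_{\mathbf{j},k}|^{p^\ast}\le\left\Vert T\right\Vert^{p^\ast}$ for each $\mathbf{j}$. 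The delicate part is the exponent bookkeeping, in the same spirit as the identity verified in the proof of Theorem 1.

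Writing $c_{\mathbf{j}}=\left(\sum_k|a_{\mathbf{j},k}|^{p^\ast}\right)^{1/p^\ast}$, the remaining step is routine norm nesting. Since $\rho\ge p^\ast$, the inclusion $\ell_{p^\ast}\hookrightarrow\ell_\rho$ gives $\sum_k|a_{\mathbf{j},k}|^\rho\le c_{\mathbf{j}}^{\,\rho}$ for every $\mathbf{j}$; summing over $\mathbf{j}$ and using $\rho\ge\sigma$ (hence $\ell_\sigma\hookrightarrow\ell_\rho$) yields
\[
\left(\sum_{\mathbf{j},k}|a_{\mathbf{j},k}|^{\rho}\right)^{1/\rho}\le\left(\sum_{\mathbf{j}}c_{\mathbf{j}}^{\,\rho}\right)^{1/\rho}\le\left(\sum_{\mathbf{j}}c_{\mathbf{j}}^{\,\sigma}\right)^{1/\sigma}\le D_{m,p}^{\mathbb{K}}\left\Vert T\right\Vert,
\]
where both $\rho\ge p^\ast$ and $\rho\ge\sigma$ are immediate from the definitions. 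As this holds for all such $T$ and all $n$, and $\rho$ is the optimal $(m+1)$-linear exponent, we conclude $D_{m+1,p}^{\mathbb{K}}\le D_{m,p}^{\mathbb{K}}$. I expect the main obstacle to be the first step: pinning down that the vector-valued inequality holds with constant exactly $D_{m,p}^{\mathbb{K}}$, and not merely up to some cotype factor, which is where the Regularity Principle does the essential work. The hypotheses fit this scheme precisely --- the constraint $p\le 2m$ is what makes the scalar $m$-linear input (\ref{599}) available, while $p>m+1$ keeps $\rho$ finite and positive.
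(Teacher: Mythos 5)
The second half of your argument (the norm nesting) is fine, but the inequality it rests on --- your ``first step'' --- is not merely left unproved: it is false, with \emph{any} constant in place of $D_{m,p}^{\mathbb{K}}$. Take the diagonal $(m+1)$-linear form $T(x^{(1)},\dots,x^{(m+1)})=\sum_{k=1}^{n}x_{k}^{(1)}x_{k}^{(2)}\cdots x_{k}^{(m+1)}$ on $\ell_{p}^{n}$. H\"older's inequality with exponents $p,\dots,p,\frac{p}{p-(m+1)}$ (here $p>m+1$ is used) gives $\left\Vert T\right\Vert =n^{\frac{p-(m+1)}{p}}$, attained at $x^{(i)}=n^{-1/p}(1,\dots,1)$. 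On the other hand, $a_{\mathbf{j},k}=1$ when $j_{1}=\cdots=j_{m}=k$ and $a_{\mathbf{j},k}=0$ otherwise, so $\sum_{k}|a_{\mathbf{j},k}|^{p^{\ast}}=1$ exactly on the $n$ diagonal multi-indices $\mathbf{j}$, and the left-hand side of your claimed vector-valued inequality equals $n^{1/\sigma}=n^{\frac{p-m}{p}}$. Since $\frac{p-m}{p}>\frac{p-(m+1)}{p}$, the claim fails by a factor of $n^{1/p}$. The structural point your exponent bookkeeping misses is that in any nested Hardy--Littlewood inequality for $(m+1)$-linear forms on $\ell_{p}$, testing with this diagonal form shows the \emph{outermost} exponent must be at least $\rho=\frac{p}{p-(m+1)}$; your mixed norm spends the small exponent $p^{\ast}$ innermost and leaves the inadmissible outer exponent $\sigma=\frac{p}{p-m}<\rho$ on the remaining block of indices.

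This also explains why no appeal to \cite[Proposition 3.4]{regularity} can close the gap: that proposition trades summability exponents for \emph{scalar-valued} multiple summing forms, and it cannot output your $\ell_{p^{\ast}}$-valued coincidence with a cotype-free constant --- nothing can, since the statement is false. The paper dualizes the last variable in the opposite way, and that is the crux. It views $x\mapsto\left(T(e_{j_{1}},\dots,e_{j_{m}},x)\right)_{\mathbf{j}}$ as a linear operator $u_{n}:E\rightarrow\ell_{\sigma}$; since $\sigma\geq2$, the space $\ell_{\sigma}$ has cotype $\sigma$ with constant $1$, which yields that $u_{n}$ is $(\sigma;1)$-summing with constant $D_{m,p}^{\mathbb{K}}\left\Vert T\right\Vert$ (inequality (\ref{761})), and the inclusion-theorem step (where \cite{regularity}, or alternatively \cite[10.4]{Di}, enters) upgrades this to $(\rho;p^{\ast})$-summing, i.e.\ to the mixed norm with the dualized index \emph{outermost} carrying $\rho$ and the block of $m$ indices \emph{inside} carrying $\sigma$ (inequality (\ref{800})). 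Setting $E=\ell_{p}^{n}$ and $x_{j_{m+1}}=e_{j_{m+1}}$, whose weak $\ell_{p^{\ast}}$ norm is $1$, and using $\sigma\leq\rho$ on the inner block then gives the scalar $(m+1)$-linear inequality with constant $D_{m,p}^{\mathbb{K}}$. If you replace your first step by that statement --- outer $\rho$ on the dualized index, inner $\sigma$ on the rest --- your nesting argument goes through essentially verbatim; as stated, however, your proof cannot be repaired around the false intermediate claim.
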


Denoting%
\begin{align*}
\lfloor x\rfloor &  :=\max\{n\in\mathbb{N}:n<x\}\\
\lceil x\rceil &  :=\min\{n\in\mathbb{N}:n\geq x\},
\end{align*}
we have the following corollary:

\begin{corollary}
Let $p\in(3,\infty).$ The finite sequence
\end{corollary}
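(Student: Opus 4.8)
The plan is to read the corollary as the assertion that, for a fixed $p\in(3,\infty)$, the finite sequence
\[
\left(  D_{m,p}^{\mathbb{K}}\right)  _{m=\lceil p/2\rceil}^{\lfloor p\rfloor}
\]
consisting of all Hardy--Littlewood constants $D_{m,p}^{\mathbb{K}}$ for which the inequality (\ref{599}) is meaningful (i.e. all integers $m$ with $m<p\leq2m$) is nonincreasing. The whole argument should reduce to iterating Theorem \ref{944} along consecutive indices of this sequence, so no new machinery beyond Section 3 is needed.

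First I would pin down the index range. Since (\ref{599}) requires $m<p\leq2m$, the admissible integers $m$ are exactly those with $p/2\leq m<p$, that is $\lceil p/2\rceil\leq m\leq\lfloor p\rfloor$ in the notation fixed just before the corollary (the strict floor $\lfloor p\rfloor=\max\{n\in\mathbb{N}:n<p\}$ encoding $m<p$, and the ceiling $\lceil p/2\rceil$ encoding $m\geq p/2$). Because $p>3$, one has $\lceil p/2\rceil\geq2$, so every admissible index is at least $2$, which is precisely the standing hypothesis $m\geq2$ of Theorem \ref{944}; this is presumably the reason the corollary restricts to $p>3$.

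Next I would verify that each one-step descent is licensed by Theorem \ref{944}. Fix an index $m$ with $\lceil p/2\rceil\leq m$ and $m+1\leq\lfloor p\rfloor$. From $m\geq\lceil p/2\rceil\geq p/2$ I get $p\leq2m$, and from $m+1\leq\lfloor p\rfloor<p$ I get $m+1<p$; together these are exactly the hypotheses $m+1<p\leq2m$ of Theorem \ref{944}, which therefore yields $D_{m+1,p}^{\mathbb{K}}\leq D_{m,p}^{\mathbb{K}}$. Chaining these inequalities from $m=\lceil p/2\rceil$ up to $m=\lfloor p\rfloor-1$ then gives the asserted monotonicity of the whole sequence.

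The main obstacle I anticipate is purely bookkeeping at the endpoints: one must check that the strict/non-strict floor and ceiling conventions translate the inequalities of (\ref{599}) and of Theorem \ref{944} correctly, in particular the borderline situations where $p$ is an integer, where $p=2m$ (so $p\leq2m$ is an equality at the lower end), and where $m+1=\lfloor p\rfloor$ (so that $m+1<p$ remains strict at the upper end). I would confirm this by testing a few representative values of $p$ — an integer, a half-integer, and a generic non-integer — to be sure that no consecutive pair in the sequence falls outside the scope of Theorem \ref{944}.
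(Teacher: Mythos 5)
Your proof is correct and follows exactly the paper's (implicit) argument: the corollary is an immediate consequence of iterating Theorem \ref{944} over consecutive admissible indices, with the hypotheses $m+1<p\leq2m$ and $m\geq2$ (guaranteed by $p>3$) checked just as you do. The only difference is that the paper's stated sequence runs up to $\lfloor p-1\rfloor$ whereas yours runs up to $\lfloor p\rfloor$, so your version is slightly stronger (it includes one additional term), and the paper's statement follows a fortiori.
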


\[
\left(  D_{m,p}^{\mathbb{K}}\right)  _{m=\lceil p/2\rceil}^{\lfloor
p-1\rfloor}%
\]
is decreasing.

\subsection{Proof of Theorem \ref{944}}

If $m<p\leq2m$ we know that%
\[
\left(  \sum_{j_{1},\cdots,j_{m}=1}^{n}\left\vert T(e_{j_{1}},\cdots,e_{j_{m}%
})\right\vert ^{\frac{p}{p-m}}\right)  ^{\frac{p-m}{p}}\leq D_{m,p}%
^{\mathbb{K}}\left\Vert T\right\Vert
\]
for all $m$--linear forms $T:\ell_{p}^{n}\times\cdots\times\ell_{p}%
^{n}\rightarrow\mathbb{K}$ and all positive integers $n$. If $E$ is any Banach
space, let us denote its dual by $E^{\ast}$ and the closed unit ball of
$E^{\ast}$ by $B_{E^{\ast}}.$ Since
\[
\frac{p}{p-m}\geq2
\]
we can prove that%
\[
\left(  \sum_{j_{1},\cdots,j_{m+1}=1}^{n}\left\vert T(e_{j_{1}},\cdots
,e_{j_{m}},x_{j_{m+1}})\right\vert ^{\frac{p}{p-m}}\right)  ^{\frac{p-m}{p}%
}\leq D_{m,p}^{\mathbb{K}}\left\Vert T\right\Vert \sup_{\varphi\in B_{E^{\ast
}}}\sum_{j_{m+1}=1}^{n}\left\vert \varphi(x_{j_{m+1}})\right\vert
\]
for all $m+1$--linear forms $T:\ell_{p}^{n}\times\cdots\times\ell_{p}%
^{n}\times E\rightarrow\mathbb{K}$ and all positive integers $n$. In fact, for
all positive integers $n,$ define the linear operator%
\[
u_{n}:E\rightarrow\ell_{\frac{p-m}{p}}%
\]
by%
\[
u_{n}(x)=\left(  T(e_{j_{1}},\cdots,e_{j_{m}},x\right)  _{j_{1},...,j_{m}%
=1}^{n}.
\]
Since $\frac{p-m}{p}\geq2$ we know that $\ell_{\frac{p-m}{p}}$ has cotype
$\frac{p-m}{p}.$ Moreover, it is folklore that (see, for instance, \cite[page
29]{david}) the cotype constant is $1:$
\[
C_{\frac{p-m}{p}}\left(  \ell_{\frac{p-m}{p}}\right)  =1.
\]
Therefore, by the definition of cotype, and letting, as usual, $r_{j}$ denote
the Rademacher functions, we have%
\begin{align*}
&  \left(  \sum\limits_{j=1}^{n}\left\Vert u_{n}(x_{j})\right\Vert ^{\frac
{p}{p-m}}\right)  ^{\frac{p-m}{p}}\\
&  \leq C_{\frac{p-m}{p}}\left(  \ell_{\frac{p-m}{p}}\right)  \left(
\int\nolimits_{[0,1]}\left\Vert \sum\limits_{j=1}^{n}r_{j}(t)u_{n}%
(x_{j})\right\Vert ^{2}dt\right)  ^{1/2}\\
&  \leq\max_{t\in\lbrack0,1]}\left\Vert \sum\limits_{j=1}^{n}r_{j}%
(t)u_{n}(x_{j})\right\Vert \\
&  \overset{\text{\cite[page 284]{lp}}}{\leq}\sup_{\omega\in B_{\ell
_{\frac{p-m}{p}}^{\ast}}}\sum_{j=1}^{n}\left\vert \omega(u_{n}(x_{j}%
))\right\vert \\
&  \leq\left\Vert u_{n}\right\Vert \sup_{\varphi\in B_{E^{\ast}}}\sum
_{j_{m+1}=1}^{n}\left\vert \varphi(x_{j_{m+1}})\right\vert \\
&  \leq\sup_{\left\Vert x\right\Vert \leq1}\left(  \sum_{j_{1},\cdots
,j_{m+1}=1}^{n}\left\vert T(e_{j_{1}},\cdots,e_{j_{m}},x)\right\vert
^{\frac{p}{p-m}}\right)  ^{\frac{p-m}{p}}\left(  \sup_{\varphi\in B_{E^{\ast}%
}}\sum_{j_{m+1}=1}^{n}\left\vert \varphi(x_{j_{m+1}})\right\vert \right)  \\
&  \leq\sup_{\left\Vert x\right\Vert \leq1}D_{m,p}^{\mathbb{K}}\left\Vert
T(\cdot,...,\cdot,x)\right\Vert \left(  \sup_{\varphi\in B_{E^{\ast}}}%
\sum_{j_{m+1}=1}^{n}\left\vert \varphi(x_{j_{m+1}})\right\vert \right)  \\
&  \leq D_{m,p}^{\mathbb{K}}\left\Vert T\right\Vert \sup_{\varphi\in
B_{E^{\ast}}}\sum_{j_{m+1}=1}^{n}\left\vert \varphi(x_{j_{m+1}})\right\vert .
\end{align*}
Since%
\[
\left(  \sum_{j_{1},\cdots,j_{m+1}=1}^{n}\left\vert T(e_{j_{1}},\cdots
,e_{j_{m}},x_{j_{m+1}})\right\vert ^{\frac{p}{p-m}}\right)  ^{\frac{p-m}{p}%
}=\left(  \sum\limits_{j=1}^{n}\left\Vert u_{n}(x_{j})\right\Vert ^{\frac
{p}{p-m}}\right)  ^{\frac{p-m}{p}}%
\]
we thus have%
\begin{equation}
\left(  \sum_{j_{1},\cdots,j_{m+1}=1}^{n}\left\vert T(e_{j_{1}},\cdots
,e_{j_{m}},x_{j_{m+1}})\right\vert ^{\frac{p}{p-m}}\right)  ^{\frac{p-m}{p}%
}\leq D_{m,p}^{\mathbb{K}}\left\Vert T\right\Vert \sup_{\varphi\in B_{E^{\ast
}}}\sum_{j_{m+1}=1}^{n}\left\vert \varphi(x_{j_{m+1}})\right\vert .\label{761}%
\end{equation}
By the Regularity Principle for sequence spaces \cite{regularity}, for
$p>m+1,$ we have%
\begin{align}
&  \left(  \sum_{j_{m+1}=1}^{n}\left(  \sum_{j_{1},\cdots,j_{m+1}=1}%
^{n}\left\vert T(e_{j_{1}},\cdots,e_{j_{m}},x_{j_{m+1}})\right\vert ^{\frac
{p}{p-m}}\right)  ^{\frac{p-m}{p}\times\frac{p}{p-\left(  m+1\right)  }%
}\right)  ^{\frac{p-\left(  m+1\right)  }{p}}\label{800}\\
&  \leq D_{m,p}^{\mathbb{K}}\left\Vert T\right\Vert \left(  \sup_{\varphi\in
B_{E^{\ast}}}\sum_{j_{m+1}=1}^{n}\left\vert \varphi(x_{j_{m+1}})\right\vert
^{\frac{p}{p-1}}\right)  ^{\frac{p-1}{p}}\nonumber
\end{align}
for all positive integers $n$. As a matter of fact, here we do not need the
Regularity Principle in its whole generality, and we provide a direct proof of
(\ref{800}) for the sake of completeness. Note that (\ref{761}) is equivalent
to%
\[
\left(  \sum_{j_{m+1}=1}^{n}\left\Vert u_{n}(x_{j_{m+1}})\right\Vert
^{\frac{p}{p-m}}\right)  ^{\frac{p-m}{p}}\leq D_{m,p}^{\mathbb{K}}\left\Vert
T\right\Vert \sup_{\varphi\in B_{E^{\ast}}}\sum_{j_{m+1}=1}^{n}\left\vert
\varphi(x_{j_{m+1}})\right\vert
\]
and by using a standard trick involving the H\"{o}lder inequality (see also
\cite[10.4 (Inclusion Theorem)]{Di}) we have (\ref{800}).

Choosing $E=\ell_{p}^{n}$, since%
\[
\sup_{\varphi\in B_{E^{\ast}}}\sum_{j_{m+1}=1}^{n}\left\vert \varphi
(e_{j_{m+1}})\right\vert ^{\frac{p}{p-1}}=1
\]
and since%
\[
\frac{p}{p-m}\leq\frac{p}{p-\left(  m+1\right)  }%
\]
we conclude that%
\[
\left(  \sum_{j_{1},\cdots,j_{m+1}=1}^{n}\left\vert T(e_{j_{1}},\cdots
,e_{j_{m}+1})\right\vert ^{\frac{p}{p-\left(  m+1\right)  }}\right)
^{\frac{p-\left(  m+1\right)  }{p}}\leq D_{m,p}^{\mathbb{K}}\left\Vert
T\right\Vert
\]
for all $m+1$--linear forms $T:\ell_{p}^{n}\times\cdots\times\ell_{p}%
^{n}\rightarrow\mathbb{K}$ and all positive integers $n$. Therefore%
\[
D_{m+1,p}^{\mathbb{K}}\leq D_{m,p}^{\mathbb{K}}.
\]

\end{document}